\newtheorem{thm}{Theorem}[section]
\newtheorem{lem}{Lemma}[section]
\newtheorem{conj}{Conjecture}[section]
\newtheorem{claim}{Claim}
\newtheorem{case}{Case}
\theoremstyle{definition}% normal
\newenvironment{proofoftheorem}[1][Proof of Theorem]{\noindent\textbf{#1.}\ }{\qed}
\allowdisplaybreaks\allowdisplaybreaks[4]
\renewcommand\subsection{\@startsection{subsection}{2}{\z@}%
                                      {-5ex \@plus -1ex \@minus -.2ex}%  % Before the title
                                      {2.5ex \@plus .2ex}%              % After the title
                                      {\normalfont\itshape}}
\tikzstyle{none}=[inner sep=0mm]
\tikzstyle{bluenode}=[fill=blue, draw=black, shape=circle, minimum
\tikzstyle{whitenode}=[fill={rgb,255: red,245; green,245;
\tikzstyle{yellownode}=[fill=yellow, draw=black, shape=circle, minimum size=0cm, inner sep=1pt]
\tikzstyle{pinknode}=[fill={rgb,255: red,255; green,191; blue,191}, draw=black, shape=circle, minimum size=0cm, inner sep=1pt]
\tikzstyle{blacknode}=[fill=black, draw=black, shape=circle, minimum size=0.2cm, inner sep=0pt]
\tikzstyle{rednode}=[fill={rgb,255: red,244; green,0; blue,0}, draw=black,
\tikzstyle{square}=[draw=black, shape=rectangle, minimum
\tikzstyle{dot}=[fill=black, draw=black, shape=circle, minimum size=0.04cm, inner sep=0pt]
\tikzstyle{blackedge}=[line width=1.2pt, black]
\tikzstyle{blackedge_thick}=[-, draw=black, line width=1.5pt,
\tikzstyle{rededge}=[-, draw=red]
\tikzstyle{rededge_thick}=[-, line width=0.45mm, draw=red]
\tikzstyle{blackedge_opacity}=[-, -, draw={rgb,255: red,91; green,87; blue,84},
\tikzstyle{balck_dash}=[-, dash pattern=on 0.2mm off 0.2mm]
\tikzstyle{blue_thick}=[-, line width=0.5mm, draw=blue]
\tikzstyle{blueedge}=[-, line width=1pt, blue, opacity=0.7]
\newcommand{\gridgraph}[2]{
    \begin{tikzpicture}[scale=2, every node/.style={circle, draw, minimum size=6pt, inner sep=2pt}]
        % Create grid of nodes
        \foreach \i in {0,...,#1} {  % Rows (P_m)
            \foreach \j in {0,...,#2} {  % Columns (P_n)
                % Draw vertices
                \node (v\i\j) at (\j,-\i) {};
            }
        }
        % Draw horizontal edges (between P_n vertices)
        \foreach \i in {0,...,#1} {
            \foreach \j in {0,...,\numexpr#2-1\relax} {
                \draw [thick] (v\i\j) -- (v\i\the\numexpr\j+1\relax);
            }
        }
        % Draw vertical edges (between P_m vertices)
        \foreach \i in {0,...,\numexpr#1-1\relax} {
            \foreach \j in {0,...,#2} {
                \draw [thick] (v\i\j) -- (v\the\numexpr\i+1\relax\j);
            }
        }
    \end{tikzpicture}
}
\begin{document}
\title{The minimum size of 2-connected chordal bipartite graphs\footnote{Supported by the National Natural Science Foundation of China   (Grant No. 12271157, 12371346) and
the Natural Science Foundation of Hunan Province, China (Grant No. 2022JJ30028, 2023JJ30178).}}
\author{ 
{\bf Licheng Zhang$^{a}$} \thanks{lczhangmath@163.com}, {\bf Yuanqiu Huang $^{b}$ \thanks{Corresponding author: hyqq@hunnu.edu.cn} } \\
\small $^{a}$ School of Mathematics, Hunan University, 
\small Changsha, 410082, China\\ 
\small $^{b}$College of Mathematics and Statistics, Hunan Normal University, 
\small Changsha, 410081, China 
}

%\footnote{Supported by the National Natural Science Foundation of China   (Grant No. 12271157, 12371346) and
%the Natural Science Foundation of Hunan Province, China (Grant No. 2022JJ30028, 2023JJ30178).
\date{}
\maketitle {\flushleft\large\bf Abstract}
A bipartite graph is  chordal bipartite if every cycle  of length at least six contains a chord. We determine the minimum size in 2-connected chordal bipartite graphs with  given order.

\noindent \textbf{Keywords:}
 chordal bipartite graph,   minimum size

\noindent  \textbf{MSC:} 05C35, 05C75

\section{Introduction}\label{se-1}
We consider finite simple graphs. For terminology and notation, we follow the book \cite{Bondy}. 
 Let $G$ be a graph. We denote by $V(G)$ and  $E(G)$  the vertex set and edge set of $G$. The \emph{order} of $G$ is  $|V(G)|$, and the \emph{size} is $|E(G)|$. We denote the set of all vertices adjacent to the vertex $v$ is $N_G(v)$. The \emph{degree} $d_G(v)$ of $v$ in $G$  is $|N_G(v)|$. A  graph is \emph{bipartite }if the vertex set can be partition into two independent sets. A bipartite  graph with  a bipartition $(X,Y)$ is called  a \emph{complete bipartite graph} if each vertex in $X$ is adjacent to each vertex in $Y$, and if $|X|=p$ and $Y=q$, we shall denote this complete bipartite graph by $K_{p,q}$.   A subset $S$ of $V(G)$ is called a \emph{vertex cut} if $G-S$ is disconnected, and a vertex-cut $S$ is called a \emph{$k$-vertex-cut} if $|S|=k$. A \emph{minimum vertex cut} of a graph is a vertex cut of smallest possible size. A vertex $v$ is called a \emph{cut vertex} if $G-v$ is disconnected. A \emph{component} of $G$ is a connected subgraph that is not part of any larger connected subgraph of $G$. Let $S$ be a vertex cut of $G$ and let $F$ be a component of $G-S$. The subgraph $H$ of $G$ induced by $S\cup V(X)$ is called an \emph{$S$-component}, denoted by $F^*$-component. For a non-complete graph $G$, its \emph{connectivity}, denoted by $\kappa(G)$, is defined to be the minimum value of $|S|$ over all vertex-cut $S$ of $G$, and $\kappa(G)=|V(G)|-1$ if $G$ is complete.  A graph $G$ is \emph{$k$-connected }if $\kappa(G)\ge k$. For a  integer $k\ge 1$,  we denote by $P_k$ a fixed path with $k$ vertices, and if $k \geq 3$ we denote by $C_k$ a fixed cycle with $k$ vertices.

%A vertex $u$ in $G$ is called \emph{simplicial}if either $d_G(u)=$ 0 or $N_G(u)$ forms a clique. 

Let $G$ be a graph. Let $S$ be a subset of $V(G)$. Denote by $G-S$ the graph obtained from $G$ by removing the vertices in $S$ together with all the edges incident with any vertex in $S$. We denote $G-v$ simply, where $S$ contains only the single vertex $v$. We define \emph{eliminating} an edge $u v$ from $G$ as $G-\{u, v\}$, denoted by $G \ominus u v$. Similarly, we define $G \ominus\left\{e_1, e_2, \ldots, e_s\right\}$ as the graph obtained from $G$ by sequentially eliminating $e_1, e_2, \ldots, e_s $.
 An edge $uv$ is called \emph{bisimplicial} if $N_G(u) \cup N_G(v)$ induces a complete bipartite graph with the bipartition $N_G(u), N_G(v)$ in $G$. Let $\sigma=[e_1,e_2,\ldots,e_k] $ be a sequence of pairwise nonadjacent edges of $H$.  We say that $\sigma$ is a \emph{perfect edge elimination order} for $G$ if each edge $e_i$ is bisimplicial in the $G \ominus \{e_1,\dots, e_{i-1}\}$ and $G \ominus \{e_1,\dots, e_{k}\}$ has no edge. A \emph{chord} $F$ of a cycle $C$ is an edge not in $E(C)$, with both of the end-vertices of $F$ lying on $C$.  A bipartite graph is called \emph{chordal bipartite} if every cycle of length at least six has a chord \footnote{In fact, chordal bipartite graphs are generally not chordal; therefore, although many publications refer to them by this misleading name, the term ``weakly chordal bipartite graph'' is a better alternative \cite[chapter 3 on page 41]{Brandstädt}.}  In {\cite{Golumbic}, two characterizations of chordal bipartite  graphs are mentioned: a bipartite graph $G$ is chordal bipartite if and only if every minimal edge separator induces a complete bipartite subgraph in $G$; moreover, a bipartite graph $G$ is chordal bipartite   if and only if every induced subgraph has a perfect edge elimination order. Various problems such as Hamiltonian cycle \cite{Müller} Steiner tree \cite{Müller2} and efficient domination \cite{Müller2} remain NP-complete on chordal bipartite graphs.

A well-known graph class closely related to bipartite chordal graphs is that of \emph{chordal graphs}, which are defined as graphs where every cycle of length greater than three has a chord. Recently, extremal problems related to chordal graphs have attracted the attention of graph theorists. Blair, Heggernes, Lima and Lokshtanov \cite{Blair} determined the maximum size of chordal graphs with bounded maximum degree and matching number.  Zhan and Zhang \cite{Zhan} determined the minimum size of a chordal graph with given order and
minimum degree. To study 1-planar graphs, we (the authors) along with Lv and Dong \cite{Zhang} determined the minimum size of a chordal graph with given order and connectivity. The following fundamental problem  arises naturally, analogous to the results on chordal graphs presented in \cite{Zhang}: \emph{What is the minimum size of a chordal bipartite graph with given order and connectivity?} The case of connectivity one in the problem   is trivial, as every connected graph with $n$ vertices and connectivity one has at least $n-1$ edges. Trees with order $n$, being chordal bipartite graphs, achieve the minimum size $n-1$. Therefore, we begin our study with 2-connected chordal bipartite graphs. The results of this paper will show that  the lower bound is no longer trivial for 2-connected chordal bipartite graphs with given order. 

To address the case of connectivity two,  we shall discover several useful tools. We first investigate how eliminating a bisimplicial edge affects the connectivity of a chordal bipartite graph, as described in Theorem \ref{thm1}. Prior to that, we mention some related research. For general graphs  with connectivity $k$, removing a vertex may reduce the connectivity, but it will be at least $k-1$. A  well-known result due to Chartrand, Kaugars, and Lick \cite{Chartrand} states that every $k$-connected graph $G$ with minimum degree $\delta(G) \geq\left\lfloor\frac{3}{2} k\right\rfloor$ has a vertex $u$ such that $G-u$ is still $k$-connected.  In 2003, Fujita and Kawarabayashi  \cite{Fujita}  considered a similar problem for eliminating an edge of a graph, and proved that every $k$-connected graph $G$ with minimum degree at least $\left\lfloor\frac{3}{2} k\right\rfloor+2$ has an edge $u v$ such that $G\ominus uv$ is still $k$-connected.  For chordal graphs $G$ with at least $k+2$ vertices and connectivity $k$, it is known that removing any simplicial vertex from $G$ does not decrease the connectivity \cite{Böhme}. Unlike chordal graphs, eliminating an bisimplicial edge from a chordal bipartite graph may reduce its connectivity, as formally stated in Lemma \ref{thm1}.

 For every 2-connected bipartite graph with $n$ vertices, the minimum size is $n$, and a cycle with $n$ vertices achieves the value $n$. Now for 2-connected chordal bipartite graphs, we provide a lower bound higher than $n$.

\begin{thm}\label{thm2}
Let $G$ be a 2-connected chordal bipartite graph of order $n\ge 4$ and size $m$.  Then 

\begin{equation*}
m \ge \begin{cases}
\frac{3}{2}n-\frac{3}{2}& \text{ if } n \text { is odd, }\\ \frac{3}{2}n-2 & \text{ if } n \text { is even. } \end{cases}
\end{equation*}

Moreover, the bounds are tight.
\end{thm}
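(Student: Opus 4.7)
The plan is to prove the bound by induction on $n$, with the main reduction being the elimination of a bisimplicial edge---which exists in every chordal bipartite graph---whose interaction with 2-connectivity is controlled by Lemma~\ref{thm1}. The base cases are $n = 4$, where the only 2-connected bipartite graph is $C_4 = K_{2,2}$ (giving $m = 4 = \tfrac{3\cdot 4}{2} - 2$), and $n = 5$, where the bipartition must be $(2,3)$ and the minimum is $K_{2,3}$ with $m = 6 = \tfrac{3\cdot 5 - 3}{2}$.

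For $n \ge 6$: if $\delta(G) \ge 3$ then $2m \ge 3n$ already exceeds the claimed bound, so I may assume $\delta(G) = 2$ and pick a vertex $u$ with $N_G(u) = \{a,b\}$. A shortest $a$-$b$ path in $G - u$ must have length exactly $2$: any longer such path (necessarily of even length $\ge 4$) together with $u$ would form a cycle of length $\ge 6$, and by chordal-bipartiteness this cycle has a chord, which in turn produces a strictly shorter $a$-$b$ path in $G - u$, contradicting minimality. Hence some $v \ne u$ lies in $N(a) \cap N(b)$, so $\{u, v, a, b\}$ induces a $K_{2,2}$. Now take a bisimplicial edge $e = xy$ of $G$. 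If $G \ominus e$ remains 2-connected, then it is 2-connected chordal bipartite on $n - 2$ vertices with $m - (d(x) + d(y) - 1)$ edges; since $d(x), d(y) \ge 2$ gives $d(x) + d(y) - 1 \ge 3$, and the bound for $n$ exceeds that for $n - 2$ by exactly $3$ (in both parities), the induction hypothesis yields $m \ge$ bound$(n)$.

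The hard case---and the main obstacle---is when every bisimplicial-edge elimination breaks 2-connectivity. By Lemma~\ref{thm1}, this forces a restrictive structural constraint (in particular, the endpoints of every such bisimplicial edge sit in a 2-cut of $G$ with specific $S$-components); I would then decompose $G$ along this 2-cut and apply induction to each $S$-component separately---after, if necessary, a local modification such as attaching a phantom $C_4$ at the cut to restore 2-connectivity and chordal-bipartiteness---and sum up the edge counts. For tightness: for even $n = 2k + 2$, the ``book'' $B_k$ consisting of $k$ four-cycles sharing a common edge $uv$ is 2-connected and chordal bipartite (every cycle of length $\ge 6$ passes through both $u$ and $v$, and so $uv$ is a chord) with $3k + 1 = \tfrac{3n}{2} - 2$ edges; for odd $n = 2k + 1 \ge 5$, starting from $K_{2,3}$ and iteratively attaching $C_4$'s along shared edges (each attachment adding $2$ vertices and $3$ edges) produces a 2-connected chordal bipartite graph of size $3k = \tfrac{3n - 3}{2}$.
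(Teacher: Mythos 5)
Your base cases, the reduction to minimum degree $2$, Case 1 (when $G\ominus e$ stays 2-connected, eliminating the bisimplicial edge costs at least $3$ edges, exactly the drop in the bound), and both extremal families are correct and essentially match the paper; the paper uses the ladder $L(2,\tfrac n2)$ and $K_{2,3}$ with an attached ladder where you use the book and $K_{2,3}$ with attached $C_4$'s, but these are interchangeable. The observation that a degree-2 vertex lies on an induced $C_4$ is correct but is never used afterwards.

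The genuine gap is the decisive case, where $\kappa(G\ominus e)=1$ for the bisimplicial edge $e=xy$: there you only announce a plan (``decompose $G$ along this 2-cut \dots attach a phantom $C_4$ \dots sum up the edge counts'') and carry out none of it. Moreover the structural claim you start from is imprecise: Lemma \ref{thm1} by itself says nothing about 2-cuts; the correct statement (Lemma \ref{p1}(5) with $k=2$, applied to the cut vertex $s$ of $G\ominus e$) is that one endpoint, say $y$, satisfies $N_G(y)=\{x,s\}$, so $y$ is a degree-2 vertex lying \emph{outside} the relevant 2-cut $\{x,s\}$. A decomposition argument would still have to verify that each patched piece is 2-connected (this needs an argument like the paper's claim that every $s$-component of $G\ominus e$ is 2-connected), that the patch preserves chordal bipartiteness (which requires knowing that induced $x$--$s$ paths inside a piece have length 2, a fact whose proof again uses the degree-2 vertex $y$), and that the edge count survives the parity bookkeeping at the shared cut; none of this appears. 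The paper's route is shorter and is what your sketch is missing: from $d_G(y)=2$ it proves directly that $G-y$ is 2-connected (neither $x$ nor $s$ can be a cut vertex of $G-y$, and any other cut vertex would already cut $G$), then applies the induction hypothesis to the chordal bipartite graph $G-y$, which has $n-1$ vertices and exactly two fewer edges; the half-integer offset between the odd and even bounds is precisely what makes this one-vertex step close. As submitted, the theorem is not proved because this case is only conjectured to be handleable.
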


\section{Proofs of main results}

The following lemma is obvious, based on the definition of connectivity.
\begin{lem}
  Let  $G$  be a graph with $\kappa(G)=k$. Let $S$ be a subset of $V(G)$ with size $s$. Then $\kappa(G-S)\ge k-s$. 
\end{lem}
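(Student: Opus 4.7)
The plan is to prove the lemma by a short contradiction argument, combining any hypothetical separator of $G-S$ with the set $S$ itself to produce a separator of $G$ that is too small. First I would dispose of the trivial situation: if $k - s \le 0$, then since $\kappa(G-S) \ge 0$ by definition, the inequality $\kappa(G-S) \ge k-s$ holds automatically. So I may assume $k - s \ge 1$, and in particular $|V(G-S)| = |V(G)| - s \ge k - s + 1 \ge 2$, which means that it is at least meaningful to talk about cuts of $G-S$.

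Next I would split into two cases according to whether $G-S$ is complete. If $G-S$ is not complete, then by definition $\kappa(G-S)$ equals the minimum size of a vertex cut of $G-S$; pick a vertex cut $T \subseteq V(G-S)$ of $G-S$ with $|T| = \kappa(G-S)$. Then $G - (S \cup T) = (G-S) - T$ is disconnected, and since we have assumed $k - s \ge 1$, the set $S \cup T$ is a proper subset of $V(G)$ leaving at least one vertex, so $S \cup T$ is a vertex cut of $G$. Hence
\[
k = \kappa(G) \;\le\; |S \cup T| \;\le\; |S| + |T| \;=\; s + \kappa(G-S),
\]
which rearranges to $\kappa(G-S) \ge k - s$, as claimed.

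For the remaining case, suppose $G-S$ is complete. Then by the convention adopted in the paper, $\kappa(G-S) = |V(G-S)| - 1 = |V(G)| - s - 1$. On the other hand, $\kappa(G) = k$ forces $k \le |V(G)| - 1$ (this bound holds whether $G$ itself is complete or not). Subtracting $s$ from both sides gives $k - s \le |V(G)| - s - 1 = \kappa(G-S)$, again as desired.

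The argument is essentially a one-liner — no real obstacle is expected — the only point requiring a little care is the bookkeeping around the complete-graph convention for $\kappa$, which is why I handle the complete and non-complete cases separately rather than trying to write a single uniform chain of inequalities. Once the two cases are dealt with, the lemma follows immediately.
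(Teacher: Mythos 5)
Your proof is correct. The paper states this lemma without proof, declaring it ``obvious, based on the definition of connectivity''; your argument --- augmenting a minimum cut of $G-S$ by $S$ to obtain a cut of $G$, with separate bookkeeping for the complete-graph convention --- is exactly the standard justification the authors had in mind, and it handles the edge cases (including $k-s\le 0$ and a disconnected $G-S$) cleanly.
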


Now, we introduce two lemmas on chordal bipartite graphs.

\begin{lem}[\cite{Golumbic}]\label{subch}
    Every induced subgraph of a chordal bipartite graph is  chordal bipartite.
\end{lem}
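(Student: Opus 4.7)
The plan is to unfold the two defining properties of a chordal bipartite graph, bipartiteness and the "every long cycle has a chord" property, and check that each is preserved under taking induced subgraphs. Let $G$ be a chordal bipartite graph with bipartition $(X,Y)$, and let $H$ be an induced subgraph of $G$ on vertex set $V(H)\subseteq V(G)$.

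First I would verify that $H$ is bipartite: the sets $X\cap V(H)$ and $Y\cap V(H)$ partition $V(H)$, and since $H$ is a subgraph of $G$, every edge of $H$ still goes between $X$ and $Y$, hence between these two parts. So $H$ is bipartite.

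Next I would verify the chord condition. Take any cycle $C$ in $H$ with $|V(C)|\ge 6$. Since $H$ is a subgraph of $G$, the cycle $C$ is also a cycle in $G$, so by the chordality of $G$ it has a chord $uv\in E(G)$, where $u,v\in V(C)\subseteq V(H)$ and $uv\notin E(C)$. The key observation, and the only substantive point in the argument, is that $H$ is an \emph{induced} subgraph of $G$: every edge of $G$ between two vertices of $V(H)$ must belong to $E(H)$. Applying this to $u,v$, we get $uv\in E(H)$, so $uv$ is a chord of $C$ in $H$. Therefore every cycle of length at least six in $H$ has a chord, and combined with bipartiteness this shows $H$ is chordal bipartite.

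There is no real obstacle here; the only place that could go wrong is if one forgot the "induced" hypothesis, since an arbitrary bipartite subgraph of a chordal bipartite graph need not be chordal bipartite (one could delete a chord of a long cycle while keeping all its edges). The induced hypothesis is exactly what rules this out, and the proof is a one-line check once that is noticed.
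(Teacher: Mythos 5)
Your proof is correct and is the standard one-line verification; the paper itself gives no proof for this lemma (it is cited from Golumbic--Goss), and your argument — that a long cycle in an induced subgraph is a cycle in the ambient graph whose guaranteed chord survives precisely because the subgraph is induced — is exactly the intended reasoning. You also correctly identify that the ``induced'' hypothesis is the only substantive point.
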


Analogous to the existence of a perfect vertex elimination order in chordal graphs, Golumbic and Goss \cite{Golumbic} proved that every chordal bipartite  graph with at least one edge has a bisimplicial edge, and subsequently established the following result.

\begin{lem}[\cite{Golumbic}]
  A bipartite graph $G$ is a chordal bipartite, then $G$  has a perfect edge elimination order.
\end{lem}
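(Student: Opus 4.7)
The plan is to argue by induction on $|V(G)|$ (equivalently, on the number of edges, since each elimination step removes two vertices and thus at least one edge), using the Golumbic--Goss bisimplicial-edge theorem that the authors explicitly invoke in the sentence immediately preceding the lemma. The base case is vacuous: if $G$ has no edges, then the empty sequence $\sigma=[\,]$ satisfies the definition, since all conditions become vacuous and $G$ already has no edge.

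For the inductive step, assume $E(G)\neq\emptyset$. By the Golumbic--Goss result, $G$ contains a bisimplicial edge $e_1=u_1v_1$. Set $G':=G\ominus e_1=G-\{u_1,v_1\}$. Since $G'$ is an induced subgraph of $G$, Lemma \ref{subch} gives that $G'$ is again chordal bipartite, and it has strictly fewer vertices than $G$. By the inductive hypothesis, $G'$ admits a perfect edge elimination order $[e_2,e_3,\ldots,e_k]$. I would then claim that $\sigma:=[e_1,e_2,\ldots,e_k]$ is a perfect edge elimination order of $G$ and verify the three conditions in the definition. Pairwise nonadjacency: for every $i\ge 2$ the edge $e_i$ lies in $G'$ and hence avoids both $u_1$ and $v_1$, so $e_1$ is nonadjacent to $e_i$; the pairs among $e_2,\ldots,e_k$ are nonadjacent by the inductive hypothesis. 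Bisimpliciality at each step: $e_1$ is bisimplicial in $G$ by choice, and for $i\ge 2$ we have $G\ominus\{e_1,\ldots,e_{i-1}\}=G'\ominus\{e_2,\ldots,e_{i-1}\}$, in which $e_i$ is bisimplicial by induction. Emptiness at the end: $G\ominus\{e_1,\ldots,e_k\}=G'\ominus\{e_2,\ldots,e_k\}$ has no edge, again by induction.

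The entire argument reduces to a single transparent induction, so the real content of the lemma is outsourced to the Golumbic--Goss theorem on the existence of a bisimplicial edge. Consequently, the only genuine obstacle---producing a bisimplicial edge from which to peel---is already dispatched by the cited result, and the remaining verification is bookkeeping on the ``removes two vertices'' semantics of $\ominus$ combined with the hereditary property of chordal bipartiteness furnished by Lemma \ref{subch}.
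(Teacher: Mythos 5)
Your argument is correct and is exactly the standard derivation: the paper does not prove this lemma (it is cited from Golumbic--Goss), but the surrounding text indicates precisely your route, namely peeling off a bisimplicial edge (whose existence is the cited theorem) and inducting via the hereditary property of Lemma \ref{subch}. The bookkeeping you give for pairwise nonadjacency, bisimpliciality at each stage, and the empty final graph is all sound.
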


Note that, unlike chordal graphs, which can be characterized by a vertex elimination order \cite{Dirac}, the converse of the above lemma may not be true \cite{Golumbic}.

Before proving Theorem \ref{thm2}, we need to establish the following key lemma as a fundamental tool.

\begin{lem}\label{p1}
Let $G$ be a  chordal bipartite graph with  $\kappa(G)=k\ge 1$ and at least three vertices. Let $uv$ be a bisimplicial  edge of $G$. Assume that  $S$ is a vertex cut of $V(G\ominus uv)$. Let $F_1,F_2, \dots, F_t$ be the  component of $G\ominus  uv -S$ where $t\ge 2$. Denote $(N_G(u)\setminus v) $ and $(N_G(v)\setminus u)$ by $U$ and $V$, respectively.  Then the following statements  hold.
\begin{itemize}
  \item [(1)]$|U|\ge k-1 $ and $|V|\ge k-1$.
  \item [(2)] $G[U\cup V]$ is isomorphic to the complete bipartite graph with the bipartition $U,V$.
  %\item [(3)] If $|S|\le k-1$, then for each $S_i$, either $A\cap V(S_i) \ne \emptyset$  or  $B\cap V(S_i)\ne \emptyset $. 
  \item [(3)]  For two distinct components $F_i$ and $F_j$, we have  $U \cap V\left(F_i\right) =\emptyset$ or $V \cap V\left(F_j\right) = \emptyset$.
  \item [(4)] $|S|\ge k-1$.
   \item [(5)] If $|S|=k-1$ then $U = S$ or $V = S$.   Moreover, if $U =S$, then $V\cap S =\emptyset$ and  $V\cap F_i \ne \emptyset$ for each $i$ where $1 \le i \le t$; and symmetrically, if $V =S$, then $U\cap S =\emptyset$ and $U\cap F_i \ne \emptyset$ for each $i$ where $1 \le i \le t$.  
\end{itemize}
 
\end{lem}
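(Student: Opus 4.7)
The plan is to treat the five items in order, each feeding into the next.

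Parts (1) and (2) are essentially unpacking the hypotheses. Since $\kappa(G)=k$ forces $\delta(G)\ge k$ and $v\in N_G(u)$, we immediately get $|U|=d_G(u)-1\ge k-1$, and symmetrically $|V|\ge k-1$. For (2), the bisimplicial condition gives that $N_G(u)\cup N_G(v)$ induces a complete bipartite graph with parts $N_G(u), N_G(v)$; since $G$ is bipartite and $uv\in E(G)$, the sets $N_G(u)$ and $N_G(v)$ lie on opposite colour classes, so $U\cap V=\emptyset$ and every $U$--$V$ pair is adjacent in $G$.

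For (3), I would argue by contradiction. If $x\in U\cap V(F_i)$ and $y\in V\cap V(F_j)$ with $i\ne j$, then by (2), $xy\in E(G)$; since $x,y\notin\{u,v\}\cup S$, the edge $xy$ survives in $G\ominus uv - S$, forcing $x$ and $y$ into a common component and contradicting $F_i\ne F_j$.

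Parts (4) and (5) carry the real content. I would introduce the index sets $A=\{i : U\cap V(F_i)\ne\emptyset\}$ and $B=\{i : V\cap V(F_i)\ne\emptyset\}$. Item (3) says that $i\in A$ forces $B\subseteq\{i\}$, and symmetrically for $B$, so exactly three scenarios are possible: $A=\emptyset$ (whence $U\subseteq S$ via the partition $V(G)\setminus\{u,v\} = S\cup\bigcup_i V(F_i)$), $B=\emptyset$ (whence $V\subseteq S$), or $A=B=\{i^{*}\}$ for a single common index. In the first two cases, (1) yields $|S|\ge k-1$ directly. In the third, any $F_j$ with $j\ne i^{*}$ is disjoint from $U\cup V$, so no vertex of $F_j$ is adjacent to $u$ or $v$ in $G$; combined with the fact that $F_j$ has no edges to any other component, this means the only external neighbours of $F_j$ in $G$ lie in $S$. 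Since $t\ge 2$ guarantees such a $j$, the set $S$ itself separates $F_j$ from the rest of $G$, so $|S|\ge\kappa(G)=k$. This proves (4).

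For (5), equality $|S|=k-1$ excludes the third scenario. In the first, $U\subseteq S$ combined with $|U|\ge k-1=|S|$ forces $U=S$, and (2) gives $V\cap S = V\cap U = \emptyset$. If some $F_i$ were to miss $V$, then since $U=S$ it would also miss $U$, so by the same reasoning as in the third-scenario argument $S$ would be a vertex cut of $G$ of size $k-1<k$, a contradiction; hence each $F_i$ meets $V$. The case $V=S$ is entirely symmetric. The main obstacle will be the dichotomy in (4): correctly extracting from (3) that $A$ and $B$ can both be nonempty only by collapsing to the same singleton, and then using bisimpliciality to turn any leftover component into a genuine vertex cut of $G$ itself.
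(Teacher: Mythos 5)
Your proposal is correct and follows essentially the same route as the paper: the same degree bound for (1), bisimpliciality for (2) and (3), and for (4)--(5) the same key mechanism of using (3) to confine $U\setminus S$ and $V\setminus S$ to a single component and then exhibiting another component whose only external neighbours lie in $S$, so that $S$ alone would be a too-small vertex cut of $G$. Your bookkeeping via the index sets $A$ and $B$ is merely a slightly more explicit organization of the paper's case analysis.
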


%For (3), if not, then we conclude that $S$ is also a vertex cut of $G$. However, $|S|=k-2$, which contradicts the condition that $\kappa(G)\ge k$.

\begin{proof}
For (1), given that $\kappa(G)=k$ and that it is known $d_G(u) \geq \kappa(G)$, it follows that $d_G(u) \geq k$. Thus, $|U| \geq k-1$, and similarly, this holds for $V$ as well, as desired.

For (2), it follows that $uv$ is bisimplicial in $G$.

For (3), otherwise, by (2), $U \cap V(F_i)$ and $V \cap V(F_j)$ induce a complete bipartite graph, which contradicts that  $F_i$ and $F_j$ are distinct components.

 For (4), suppose that $|S|\le k-2$. Then by (1), we have  $|U|\ge k-1$ and  $|V|\ge k-1$, which implies that   $|U|>|S|$ and  $|V|>|S|$. Therefore, $U\setminus (U\cap S)\ne \emptyset $ and $V\setminus (V\cap S)\ne\emptyset $. Now  by (3) all vertices of  $U\setminus (U\cap S)$ and $U\setminus (U\cap S)$ lies within a unique component, namely $F_1$. Note that $t\ge 2$, which implies that $S$ is a vertex-cut of $G$ that separates the vertices of $F_2$ from the vertices of $G-(S\cup V(F_2)$, a contradiction to   the fact that $\kappa(G)=k$.  
 
For (5), suppose that $U\ne S$ and  $V\ne S$. Then by (1) we have $U\cap S\ne \emptyset$ and   $V\cap S\ne \emptyset$.  then  by (3) there exists  the  unique component, namely $F_1$, such that $U\setminus (U\cap S) \subseteq V(F_1)$ and $V\setminus (V\cap S) \subseteq V(F_1)$. Now $S$ is a $(k-1)$-vertex-cut of $G$, separating the vertices of $F_1$ from $G-(S\cup V(F_1)$,  a contradiction to   the fact that $\kappa(G)=k$.  So $U=S$ or  $V=S$. If $U=S$, then, since $G$ is bipartite, implying $U\cap V=\emptyset$, it follows that   $V\cap S =\emptyset$.  Now suppose that there exists a component $F_i$ such that  $V(F_i) \cap V=\emptyset$, then $S$ is a ($k-1$)-vertex-cut in $G$ that separates the vertices of $F_i$ from  the vertices of $G-(V(F_i)\cup S)$, a contradiction to the fact that $\kappa(G)=k$. Symmetrically, the same applies to $V=S$, and thus (5) holds.
\end{proof}

The structure described in Lemma \ref{p1}(5), where $V=S$, is shown in Fig. \ref{FIG1}.

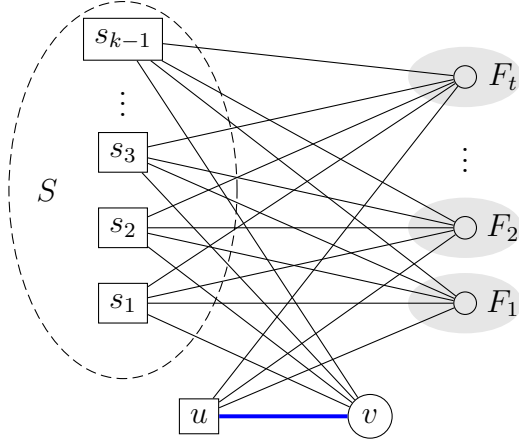
\begin{figure}
    \centering
    \begin{tikzpicture}[scale=0.5]
        % Define custom styles
        \tikzset{
            whitenode/.style={circle, draw=black, fill=white, minimum size=2pt, inner sep=3pt},  % White nodes as circles
            circlenode/.style={circle, draw=black, fill=gray!20, minimum size=2pt, inner sep=3pt}, % Custom circle style
            rectanglenode/.style={rectangle, draw=black, fill=white, minimum size=2pt, inner sep=4pt}  % Custom rectangle style
        }

        % Create nodes using the styles
         \draw[dashed, dash pattern=on 4pt off 2pt] (-4, 5) ellipse (3 and 5);
        \node[rectanglenode] (u) at (-2, -1) {$u$};    % White node with label "u"
        \node[whitenode] (v) at (2.5, -1) {$v$};  % Rectangle node with label "v"
        \node at (-6, 5) {$S$}; 
        \node [rectanglenode] (n2) at (-4, 2) {$s_1$};
		\node [rectanglenode] (n3) at (-4, 4) {$s_2$};
		\node [rectanglenode] (n4) at (-4, 6) {$s_3$};
		\node [rectanglenode] (n5) at (-4, 9) {$s_{k-1}$};

 \node at (5, 6) {$\vdots$};  % You can adjust the position as needed
 \node at (-4, 7.5) {$\vdots$}; 

\draw[fill=gray!20, draw=none] (5, 2) ellipse (1.5 and 0.8);  % Ellipse around n6
        \draw[fill=gray!20, draw=none] (5, 8) ellipse (1.5 and 0.8);   % Ellipse around n7
        \draw[fill=gray!20, draw=none] (5, 4) ellipse (1.5 and 0.8);   % Ellipse around n8 
        \node[circlenode] (n6) at (5, 2) {};  % No draw for invisible node
        \node[circlenode] (n7) at (5, 4) {};   % No draw for invisible node
        \node[circlenode] (n8) at (5, 8) {};   % No draw for invisible node
       \draw[line width=1.5pt,draw=blue] (u) -- (v);
        \node  (S1) at (6, 2) {$F_1$};
		\node  (S2) at (6, 4) {$F_2$};
		\node  (ST) at (6, 8) {$F_t$};

\foreach \n in {n2, n3, n4, n5} {
            \draw (v) -- (\n);
        }

 \foreach \n in {n6, n7, n8} {
            \draw (u) -- (\n);
        }
    \foreach \rect in {n2, n3, n4, n5} {
            \foreach \circ in {n6, n7, n8} {
                \draw (\rect) -- (\circ);
            }
        }        
    \end{tikzpicture}
    \caption{Illustration of the structure described in Lemma \ref{p1}(5) in the case of $V=S$, where $uv$ is an bisimplicial edge.}
    \label{FIG1}
\end{figure}

\begin{lem}\label{thm1}
Let $G$  be a  chordal bipartite graph with  $\kappa(G)=k$ where $k\ge 1$. Let $uv$ be a bisimplicial  edge of $G$. Then $\kappa(G \ominus uv)\ge k-1$.  
\end{lem}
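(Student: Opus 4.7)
The plan is to deduce the inequality $\kappa(G \ominus uv) \ge k-1$ almost immediately from Lemma \ref{p1}(4), which already guarantees that every vertex cut of $G \ominus uv$ has size at least $k-1$. Since $\kappa(G \ominus uv)$ is by definition either the smallest size of such a cut (when $G \ominus uv$ is not complete) or the quantity $|V(G \ominus uv)| - 1$ (when $G \ominus uv$ is complete), the claim reduces to checking that the degenerate ``complete'' case also behaves correctly.

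First I would note that by Lemma \ref{subch}, $G \ominus uv$ is again chordal bipartite, so Lemma \ref{p1} can legitimately be invoked on it. Then I would split into two cases. In the main case, $G \ominus uv$ admits at least one vertex cut. Let $S$ be a minimum one, so that $\kappa(G \ominus uv) = |S|$. Applying Lemma \ref{p1}(4) to $G$, the bisimplicial edge $uv$, and this cut $S$ yields $|S| \ge k-1$, which is precisely the desired bound.

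The remaining case is when $G \ominus uv$ has no vertex cut at all, i.e., when $G \ominus uv$ is a complete graph. Since $G$, and hence $G \ominus uv$, is bipartite, and since a complete graph that is also bipartite has at most two vertices, this forces $n \le 4$. I would then verify the conclusion by hand in the small subcases $n \in \{2,3,4\}$, using the convention $\kappa(K_p) = p-1$ together with the easy observation that every bipartite graph on at most four vertices has connectivity at most two; in each subcase the inequality $\kappa(G \ominus uv) \ge k-1$ is immediate.

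I do not foresee any real obstacle, since Lemma \ref{p1}(4) has already absorbed the substantive structural work. The only minor subtlety is that Lemma \ref{p1} formally presupposes the existence of a vertex cut of $G \ominus uv$ (with at least two resulting components), a hypothesis that fails precisely when $G \ominus uv$ is complete; but this situation forces $n \le 4$ and is dispatched by a short case check on the handful of small chordal bipartite graphs.
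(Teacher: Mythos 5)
Your proof is correct and essentially identical to the paper's: both obtain the bound by applying Lemma \ref{p1}(4) to a minimum vertex cut of $G \ominus uv$, with the paper dispatching the degenerate situation through its separate $k=1$ case while you instead handle explicitly the case where $G \ominus uv$ is complete. The only small remark is that the appeal to Lemma \ref{subch} is superfluous, since Lemma \ref{p1} is invoked on $G$ itself (chordal bipartite by hypothesis) with $S$ a vertex cut of $G \ominus uv$, not on $G \ominus uv$.
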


\begin{proof}
If $k=1$, then $G\ominus u v$ is either connected or disconnected, and in this case, the lemma holds. Now we assume that $k\ge 2$. Suppose that  $S$ is a minimum vertex cut of $G\ominus  uv$ with $|S|\le k-2$ for $k\ge 2$. Now we note that $G$, $u v$ and $S$ satisfy all the conditions of Lemma \ref{p1}.  Thus by applying Lemma \ref{p1}(4), we have
 $|S|\ge k-1$,   a contradiction.  
\end{proof}

Now we prove Theorem \ref{thm2}.

\begin{proofoftheorem}[Proof of Theorem \ref{thm2}]
If $\kappa(G)\ge 3$, then it is known that $\delta(G)\ge \kappa(G)\ge 3$, which implies 
$m\ge \frac{3n}{2}$; thus the theorem clearly holds. Therefore, we assume below that $\kappa(G)=2$.  We  use induction on $n$.  In the base case, for $n=4$, since $\delta(G)\ge 2$, it is clear that $m\ge 4$, thus the theorem holds for this case. For $n=5$, similarly, we have $m\ge 5$. Since $\kappa(G)=2$,  for $m=5$, $G$ can only be $C_5$, which is not bipartite; therefore, $m \geq 6$, confirming that the theorem holds in this case. (In fact, it is easy to verify that for $n=4, G$ is isomorphic to $C_4$, and for $n=5$, $G$ is isomorphic to $K_{2,3}$). We assume that the theorem holds for every  chordal bipartite graph having connectivity two of order $n'$ such that $6\le n'<n$.
Let $e=uv$ be a bisimplicial edge of $G$.  By Theorem \ref{thm1}, $G \ominus  e$ is connected, and thus we can distinguish two cases.

\begin{case}
$\kappa(G\ominus e)\ge 2$.
\end{case}
For even $n$, we have $|V(G \ominus  e)|=n-2$, which remains even.  By Lemma \ref{subch}, $G \ominus e$ is chordal bipartite. Combining these with the inductive hypothesis, we have
\begin{align*}
m \geq & \ |E(G \ominus  e)| + 3 \\
    \geq & \  \frac{3}{2}(n-2) -2 + 3\\
    =&\frac{3}{2}n-2.
\end{align*}
For odd $n$, we have $|V(G \ominus  e)|=n-2$, which remains odd. Similarly, we have
\begin{align*}
m \geq & \ |E(G  \ominus  e)| + 3 \\
    \geq & \ \frac{3}{2}(n-2) - \frac{3}{2}+ 3\\
    =&\frac{3}{2}n-\frac{3}{2}.
\end{align*}

\begin{case}
$\kappa(G \ominus  e)= 1$.
\end{case}
 Let $s$ be cut vertex  of $G\ominus  e$.  Denote $(N_G(u)\setminus v) $ and $(N_G(v)\setminus u)$ by $U$ and $V$, respectively.  Let $F_1,F_2, \dots, F_t$ be the  component of $G\ominus  e -s$ where $t\ge 2$.   Since $\kappa(G)=2$, it follows that $d_G(u)\ge 2$ and $d_G(v)\ge 2$.  Furthermore, $G, u v$, and $s$ satisfy the conditions of Lemma \ref{p1} (at this point, replace $S$ with $\{s\}$ in the lemma). Thus, by Lemma \ref{p1}(5), without loss of generality,  we may assume  $V=\{s\}$. Consequently, $d_G(v)=2$. 

\begin{claim}\label{c1}
The vertex $u$ is connect to at least one vertex in each $F_i$ where $1\le i\le t$.  
\end{claim}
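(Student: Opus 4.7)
The plan is to observe that the claim is an immediate consequence of Lemma \ref{p1}(5), which has already been set up in the paragraph preceding the claim.

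Indeed, in Case 2 we are given a cut vertex $s$ of $G \ominus e$ with components $F_1,\dots,F_t$ ($t\ge 2$), and the hypotheses of Lemma \ref{p1} have just been verified with $k=2$ and $S=\{s\}$ (so $|S|=k-1$). Under the standing assumption $V=\{s\}$, we are precisely in the ``$V=S$'' branch of Lemma \ref{p1}(5). That branch concludes $U \cap V(F_i) \neq \emptyset$ for every $1 \le i \le t$.

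Since $U = N_G(u)\setminus\{v\}$ by definition, the nonemptiness of $U\cap V(F_i)$ means $u$ has at least one neighbor in each component $F_i$ of $G \ominus e - s$, which is exactly the content of the claim. Thus the proof is a single line invoking Lemma \ref{p1}(5).

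There is no real obstacle here: all the structural work (verifying $uv$ is bisimplicial, establishing $V=\{s\}$, ruling out the ``$U=S$'' case by the choice of labelling ``without loss of generality'') was carried out in the lines preceding the claim, so the argument reduces to citing the relevant part of Lemma \ref{p1}(5).
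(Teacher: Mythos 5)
Your proposal is correct, and in fact it cites the right part of the lemma: with $k=2$, $S=\{s\}$ (so $|S|=k-1$) and the standing normalization $V=S$, the ``moreover'' clause of Lemma \ref{p1}(5) in the $V=S$ branch states precisely that $U\cap V(F_i)\neq\emptyset$ for every $1\le i\le t$, and since $U=N_G(u)\setminus\{v\}$ this is the claim. The paper instead disposes of the claim by citing Lemma \ref{p1}(3), which is a weaker (indeed, here vacuous) statement: once $V=\{s\}\subseteq S$, the condition $V\cap V(F_j)=\emptyset$ holds automatically for every component, so part (3) by itself yields no information about $U$; to get the claim from the ideas behind part (3) one would have to re-run the cut argument (if $U\cap V(F_i)=\emptyset$ for some $i$, then $\{s\}$ would be a $1$-cut of $G$ separating $F_i$, contradicting $\kappa(G)=2$), which is exactly the argument packaged in part (5). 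So your one-line invocation of Lemma \ref{p1}(5) is the cleaner and more accurate justification of the same structural fact.
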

\begin{proof}
It follows directly from Lemma \ref{p1}(3).
\end{proof}

\begin{claim}\label{c2}
Every  $s$-component $F^*_i$ of $G\ominus  e$ is 2-connected.
\end{claim}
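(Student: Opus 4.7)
The plan is a proof by contradiction: assume $F_i^*$ has a cut vertex $c$, and show that $c$ would then already be a cut vertex of $G$, contradicting $\kappa(G)=2$. First I would rule out $c=s$: since $F_i$ is a component of $G \ominus e - s$, the graph $F_i^* - s = F_i$ is connected, so $s$ cannot disconnect $F_i^*$. Hence $c \in V(F_i)$.

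Let $D_1, \dots, D_q$ denote the components of $F_i^* - c$, labelled so that $s \in D_1$, and fix an arbitrary $j \ne 1$. The main work is to pin down the neighbors of $D_j$ in $G$ that lie outside $D_j$. Because $F_i$ is a component of $G \ominus e - s = G - \{u,v,s\}$, no vertex of $D_j$ can have a neighbor in any other $F_{i'}$, so such external neighbors must lie in $\{u,v,s,c\}$. The vertex $v$ is excluded because $N_G(v)=\{u,s\}$ (using $V=\{s\}$), and $s$ is excluded because otherwise the edge from $D_j$ to $s$ would survive in $F_i^* - c$ and place $D_j$ together with $D_1$. Thus the only possible external neighbors of $D_j$ in $G$ are $u$ and $c$.

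The critical step is to further eliminate $u$ using the bisimpliciality of $uv$. If some $d \in D_j$ were adjacent to $u$, then $d \in U = N_G(u) \setminus \{v\}$, and by Lemma \ref{p1}(2) every vertex of $U$ is adjacent to every vertex of $V=\{s\}$, so $d$ would be adjacent to $s$ — contradicting the preceding paragraph. Hence $\{c\}$ alone separates $D_j$ from $\{u,v,s\}$ in $G$, so $c$ is a cut vertex of $G$, contradicting $\kappa(G)=2$.

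What remains is to confirm the size/connectedness hypotheses for 2-connectedness. Connectedness of $F_i^*$ is immediate: since $s$ is a cut vertex of $G \ominus e$ and the $F_j$'s are the components of $G \ominus e - s$, the vertex $s$ must have at least one neighbor in each $F_j$. The hypothesis $|V(F_i^*)| \ge 3$ I would verify by a short degree argument using $\delta(G) \ge \kappa(G) = 2$ together with the fact that $N_G(v)=\{u,s\}$ already commits $v$ elsewhere, ruling out $|V(F_i)|=2$. I anticipate the main obstacle to be precisely this bookkeeping for the smallest $F_i$, while the bisimpliciality step above is the clean conceptual core of the argument.
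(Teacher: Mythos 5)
Your central argument is correct and is essentially the paper's own: the paper likewise supposes $F_i^*$ has a cut vertex $w\ne s$, uses Lemma \ref{p1}(2) (with $V=\{s\}$, every vertex of $U$ is adjacent to $s$) to force the $U$-vertices of $F_i^*$ into the component of $F_i^*-w$ containing $s$, and concludes that $w$ alone separates any other component of $F_i^*-w$ from the rest of $G$, contradicting $\kappa(G)=2$. Your reorganization (first confining the external neighbours of $D_j$ to $\{u,c\}$, then eliminating $u$ via bisimpliciality) is the same idea; just note explicitly that a neighbour of $D_j$ lying in another component $D_{j'}$ of $F_i^*-c$ is excluded for the same reason as $s$ (such an edge of the induced subgraph $F_i^*$ would survive in $F_i^*-c$), a routine point you currently skip.

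The step you flagged as remaining, namely proving $|V(F_i^*)|\ge 3$, cannot be carried out, because it is false: a component $F_i$ may be a single vertex $x$ with $N_G(x)=\{u,s\}$, which is compatible with $\delta(G)\ge 2$ since nothing forces $x$ to be a neighbour of $v$. For a concrete instance take $V(G)=\{u,v,s,x,a,b,c\}$ with edges $uv,ux,ua,vs,sx,sa,sc,ab,bc$: this graph is a $2$-connected chordal bipartite graph, $uv$ is bisimplicial, $s$ is the unique cut vertex of $G\ominus uv$, $V=\{s\}$, and $\{x\}$ is a component of $G\ominus uv-s$, so $F_1^*\cong K_2$. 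Consequently the claim should be read (as the paper's proof, which only excludes cut vertices $w\ne s$, and its later use in showing $G-v$ is $2$-connected implicitly do) as ``no $s$-component $F_i^*$ has a cut vertex''; your main argument proves exactly that, the $K_2$ case is vacuous for it, and no vertex-count bookkeeping is needed or possible. With that reading adopted, your proof is complete and matches the paper's.
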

\begin{proof}

Suppose not. Then, $F^*_i$ contains a cut vertex $w\ne s$ ($s$ obviously cannot be a cut vertex of $F^*_i$ ). In this case, we shall assert that $w$ is a cut vertex of $G\ominus  e$. Let $T_1,T_2, \dots, T_l$ be  the components of $F^*_i$ where $l\ge 2$. We assume that the vertex cut (of $G\ominus  e$) $s \in V(T_1)$.
By Lemma \ref{p1}(2),  $s$ is adjacent to all vertices of $U$.  Combining this with the  assumption that $w$ is a vertex cut of $F^*_i$, we have $U\cap F^*_i  \subseteq T_1$. Now $w$ is also a vertex cut of $G$, separating the vertices of $T_2$ from the vertices of $G-(V(T_2)\cup \{w\})$, a contradiction to the fact that $\kappa(G)=2$.
\end{proof}

\begin{claim}
 $G- v$ is 2-connected.
\end{claim}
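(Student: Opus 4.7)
The plan is to verify 2-connectivity of $G - v$ directly: for every vertex $w \in V(G-v)$, I will show $(G-v) - w$ is connected, doing a short case analysis on the location of $w$. The ingredients I will rely on are (a) Claim \ref{c1}, which gives that $u$ is adjacent to at least one vertex of each $F_i$; (b) Claim \ref{c2}, which gives that each $s$-component $F^*_i$ is 2-connected; and (c) the fact that $G\ominus e$ is connected with $s$ as its unique cut vertex, so that $s$ has a neighbor inside every $F_i$ as well.

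The structure of $G - v$ is transparent from the setup: since $V = \{s\}$ forces $N_G(v) = \{u,s\}$ and thus $d_G(v) = 2$, removing $v$ deletes only the edges $uv$ and $vs$. Consequently $G - v$ is obtained from $G \ominus e$ by reattaching $u$ together with all edges from $u$ to $U$, where $U \subseteq \bigcup_{i=1}^t V(F_i)$ and, by Claim \ref{c1}, $U \cap V(F_i)\neq\emptyset$ for every $i$.

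First I would handle $w = u$: then $(G-v) - u = G \ominus e$, which is connected (we are in Case 2). Next, for $w = s$: the graph $(G-v) - s$ consists of the $F_i$'s (each still a connected subgraph) together with $u$, and $u$ has a neighbor in every $F_i$ by Claim \ref{c1}, so the whole thing is connected. The main case is $w \in V(F_i)$ for some $i$. By Claim \ref{c2}, $F^*_i - w$ is connected, so every vertex of $V(F_i)\setminus\{w\}$ remains connected to $s$ in $(G-v) - w$; the other $s$-components $F^*_j$ ($j\ne i$) are untouched and keep $s$ joined to their vertices; and $u$ is linked into this part of the graph through any $F_j$ with $j\ne i$, which exists because $t\ge 2$.

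The main obstacle I anticipate is precisely this last subcase: one must be sure that $u$ does not get detached after deleting a vertex of $F_i$. This is exactly where the hypothesis $t\ge 2$ enters, together with Claim \ref{c1}, which furnishes an edge from $u$ to some vertex of a different component $F_j$. Everything else is a routine connectivity bookkeeping argument using Claim \ref{c2}.
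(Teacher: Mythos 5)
Your proof is correct and follows essentially the same route as the paper: the same case split on the deleted vertex ($u$ handled via connectedness of $G\ominus e$, $s$ via Claim \ref{c1}, and a vertex of some $F_i$ via Claim \ref{c2}), only phrased as a direct verification rather than by contradiction. Your explicit check that $u$ stays attached through a component $F_j$, $j\ne i$ (using $t\ge 2$ and Claim \ref{c1}) spells out a detail the paper leaves implicit, but the argument is the same.
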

\begin{proof}
Suppose that there exists a cut vertex $w$ of  $G- v$. We now assert that $w\ne u,s$.   Recall that $G\ominus  e$ is connected. This means that $(G-v)-u$ is connected, so $u$ is a not vertex cut of  $G- v$.  By Claim \ref{c1}, $G- v-s$ has only one component, and thus $s$ is also not a vertex cut of  $G- v$. Thus,  $w$ must be within $V(G-v)\setminus \{u, s\}$. It must also lie in one of the components $F_i$ of $G\ominus e-s$. Now, $w$ would also be a cut vertex of the $w$-component $F^*_i$, which contradicts Claim \ref{c2}.
\end{proof}

Now for even $n$, and thus $|V(G -v)|$ is odd.  By Lemma \ref{subch}, $G -v$ is chordal bipartite. Recall that $d_G(v)=2$.  Combining these with the inductive hypothesis, we have
\begin{align*}
m = & \ |E(G -v)| + 2 \\
    \geq &  \frac{3}{2}(n-1) -\frac{3}{2} + 2\\
    =& \frac{3}{2}n-1\\
    \ge &\frac{3}{2}n-2.
\end{align*}
Similarly, for odd $n$, we have
\begin{align*}
m =& \ |E(G -v)| + 2 \\
    \geq &  \  \frac{3}{2}(n-1) - 2+ 2\\
    =& \frac{3}{2}n-\frac{3}{2}.
\end{align*}

Thus, the bounds of the theorem are proven. 

The following proves that these bounds are attainable. A  \emph{grid graph} $L(m, n)$ is the graph Cartesian product  two paths on $m$ and $n$ vertices. For even $n\ge 2$, the constructed graph $L(2, \frac{n}{2})$ is shown in Fig. \ref{f0}. For odd $n\ge 5$, we first select  $K_{2,3}$, and then, we attach a grid graph $L(2, \frac{n-3}{2})$ by identifying an edge of $K_{2,3}$, shown in Fig. \ref{f1}. It is easy to verify that the constructed graphs are chordal bipartite, and their sizes satisfy the lower bounds. Therefore, we have completed the proof.
\end{proofoftheorem}

\begin{figure}
\centering

\gridgraph{1}{4}
\begin{tikzpicture}[overlay]
        % Add ellipsis to the right of the top and bottom rows
        \node at (-5,2.2) {$\dots$};  % Dots at the top
        \node at (-5,-0) {$\dots$};  % Dots at the bottom
    \end{tikzpicture}

\caption{A chordal bipartite graph with even order $n$ and $\frac{3}{2}n-2$ edges. }
\label{f0}

\end{figure}
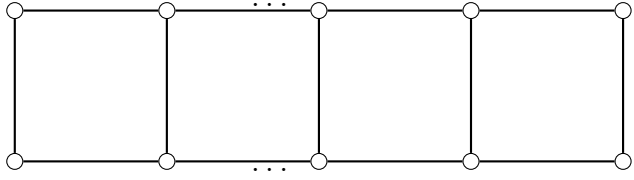

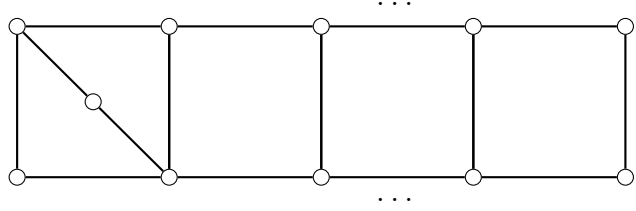
\begin{figure}
\centering
\begin{tikzpicture}
    % Define styles for nodes and edges
    \tikzset{
        node/.style={circle, draw=black, fill=white, minimum size=6pt, inner sep=0pt},
        edge/.style={draw=black, thick},
        label/.style={font=\small}
    }

    % Place the nodes using a loop
    \begin{pgfonlayer}{nodelayer}
        % Left square nodes + diagonal node
        \node[node] (A) at (-4, 2) {};  % Top-left
        \node[node] (B) at (-4, 0) {};  % Bottom-left
        \node[node] (E) at (-3, 1) {};  % Diagonal node in the left square

        % Define the other nodes in a loop (center and right squares)
        \foreach \i [count=\j from 0] in {-2, 0, 2, 4} {
            \node[node] (T\j) at (\i, 2) {};  % Top row
            \node[node] (B\j) at (\i, 0) {};  % Bottom row
        }

        % Dots for continuation
        \node (topdots) at (1, 2.3) {$\dots$};  % Dots at the top
        \node (bottomdots) at (1, -0.3) {$\dots$};  % Dots at the bottom
    \end{pgfonlayer}

    % Draw the edges using a loop
    \begin{pgfonlayer}{edgelayer}
        % Left square edges (manual due to diagonal)
        \draw[edge] (A) -- (B);  % Left vertical
        \draw[edge] (A) -- (T0);  % Top horizontal
        \draw[edge] (B) -- (B0);  % Bottom horizontal
        \draw[edge] (T0) -- (B0);  % Right vertical
        \draw[edge] (A) -- (E);  % Diagonal to middle node
       
        \draw[edge] (E) -- (B0);  % Middle to bottom center

        % Draw edges for the remaining squares (center and right squares)
        \foreach \i in {0, 1, 2} {
            \draw[edge] (T\i) -- (T\the\numexpr\i+1\relax);  % Top horizontal
            \draw[edge] (B\i) -- (B\the\numexpr\i+1\relax);  % Bottom horizontal
            \draw[edge] (T\i) -- (B\i);  % Left vertical
            \draw[edge] (T\the\numexpr\i+1\relax) -- (B\the\numexpr\i+1\relax);  % Right vertical
        }
    \end{pgfonlayer}
\end{tikzpicture}

\caption{A chordal bipartite graph with odd order $n$ and $\frac{3}{2}n-\frac{3}{2}$ edges. }
\label{f1}
\end{figure}

\section{Some remarks}
In this article, we determine the minimum size in 2-connected chordal bipartite graphs with  given order. However, the problem for higher connectivity remains unsolved. Based on some observations, we propose the following conjecture for the general case.
\begin{conj}
Let $G$ be a chordal bipartite graph with $n$ vertices and $m$ edges. Given a  integer  $k\ge 3$, if $\kappa(G)= k$, then there exists  an integer $N$ such that for $n\ge N$, it holds that $m\ge \frac{1+k}{2}n-O(1)$.
\end{conj}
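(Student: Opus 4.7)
My plan is to mirror the inductive structure of the proof of Theorem \ref{thm2}, now inducting on $n$ for each fixed $k \ge 3$. Base cases for small $n$ are absorbed into the $O(1)$ term, and the inductive hypothesis asserts the conjectured bound for every chordal bipartite graph on fewer than $n$ vertices with connectivity at least $k$ (the bound is monotone in the connectivity, so this also covers $\kappa = k+1, k+2, \ldots$).

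For the inductive step on $G$ with $\kappa(G) = k$, take a bisimplicial edge $e = uv$ (by Golumbic--Goss). By Lemma \ref{thm1}, $\kappa(G \ominus e) \ge k-1$. In Case 1, $\kappa(G \ominus e) \ge k$: the inductive hypothesis yields $|E(G \ominus e)| \ge \frac{k+1}{2}(n-2) - O(1)$, and since $d(u) + d(v) - 1 \ge 2k-1$, we obtain $m \ge \frac{k+1}{2}n + (k-2) - O(1)$, as desired. In Case 2, $\kappa(G \ominus e) = k-1$: by Lemma \ref{p1}(5), we may assume $V = N_G(v) \setminus \{u\}$ equals the $(k-1)$-cut, so $d(v) = k$, and every component $F_i$ of $(G \ominus e) - V$ meets $U = N_G(u) \setminus \{v\}$. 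The strategy is to reduce to $G - v$: if one can prove $\kappa(G - v) \ge k$, then induction gives $|E(G - v)| \ge \frac{k+1}{2}(n-1) - O(1)$, and adding back the $d(v) = k$ edges at $v$ yields $m \ge \frac{k+1}{2}n + \frac{k-1}{2} - O(1)$.

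To establish $\kappa(G - v) \ge k$, I would generalize Claim 2 from the paper: each $V$-component $F^*_i = G[F_i \cup V]$ is $k$-connected. Any $(<k)$-cut of $F^*_i$ would, using that $V$ separates $F^*_i$ from the rest of $G \ominus e$ and that $uv$ is bisimplicial, lift to a $(<k)$-cut of $G$, contradicting $\kappa(G) = k$. A putative $(k-1)$-cut $T$ of $G - v$ would then either contain $V$ (but $V$ alone is not a cut of $G - v$, since $u$ bridges all $F_i$ via $U$ by Lemma \ref{p1}(5)), or only partially intersect $V$, in which case $T$ restricts to a cut of some $F^*_i$ of size $<k$, contradicting the generalized claim.

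The \textbf{main obstacle} is the subcase $|U| = k-1$ within Case 2, where $d_{G-v}(u) = k-1$ automatically forces $\kappa(G - v) \le k-1$, so the reduction to $G - v$ fails outright. In this degenerate configuration, $G$ contains an induced $K_{k,k}$ on $\{u,v\} \cup U \cup V$ with the $F_i$'s hanging off $U$. A natural fallback is to eliminate $uv$ entirely and apply the conjecture (inductively on $k$) to $G \ominus e$ with connectivity $k-1$, but this only yields $\frac{k}{2}(n-2) - O(1)$ edges, losing $\Omega(n)$ from the target bound. Recovering this gap requires a global argument: one would need to iterate the reduction and exploit that each occurrence of the degenerate subcase contributes an induced $K_{k,k}$ on $2k$ vertices carrying $k^2$ edges, and that the number of attachment components is at most $k-1$, yielding an amortized surplus of edges that compensates for the weaker per-step bound. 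Making this amortized accounting precise without sacrificing the asymptotic constant $\frac{k+1}{2}$ is the principal difficulty of the conjecture.
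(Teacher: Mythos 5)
The statement you are addressing is stated in the paper as a \emph{conjecture}: the authors give no proof, and in fact they explicitly exhibit (Fig.~\ref{f3}) a chordal bipartite graph with $\kappa(G)=3$, a bisimplicial edge $uv$, $\kappa(G\ominus uv)=2$ and $\kappa(G-v)=2$, precisely to show that the inductive scheme of Theorem~\ref{thm2} breaks down for $k\ge 3$. Your proposal correctly reconstructs that scheme and, to your credit, lands on exactly the same obstruction: in the paper's example one has $|U|=|V|=k-1$, so it is an instance of the ``degenerate subcase'' you isolate, where $d_{G-v}(u)=k-1$ forces $\kappa(G-v)\le k-1$ and the reduction to $G-v$ is unusable.

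The gap is therefore genuine and is the whole of the conjecture: your Case~1 and the non-degenerate part of Case~2 are plausible adaptations of the paper's argument, but the degenerate subcase is not resolved. The fallback of passing to $G\ominus e$ with connectivity $k-1$ loses a linear (in $n$) number of edges against the target $\frac{k+1}{2}n-O(1)$, and the proposed amortization --- charging each degenerate step an induced $K_{k,k}$ worth $k^2$ edges --- is only a heuristic: you do not control how often the degenerate case recurs along the elimination sequence, whether the $K_{k,k}$'s found at different steps are edge-disjoint, or whether the surplus per occurrence actually covers a deficit that grows with the number of vertices removed between occurrences. Until that accounting is carried out (or a structurally different argument is found), this is a program rather than a proof; as it stands it establishes nothing beyond what the paper already observes in its concluding remarks.
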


When extending to higher values of connectivity $k$, we observe Case 2 (where $\kappa(G \ominus u v)=k-1$) in the inductive step  in Theorem \ref{thm2} that $G-v$ may no longer be $k$-connected for $k \geq 3$.
A example with connectivity $k=3$ shown in Fig. \ref{f3},  where  $\kappa(G\ominus uv)=2$ and $\kappa(G-v)=2$. 
Thus, a detailed investigation on the structure of $G-v$ will be a worthwhile endeavor for future research.

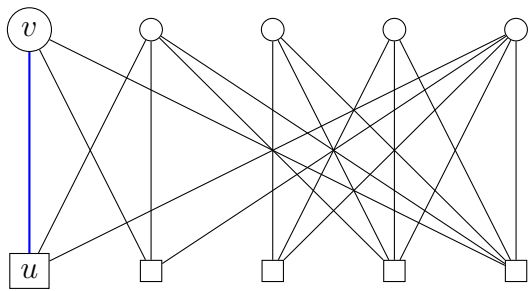
\begin{figure}
\centering
\begin{tikzpicture}[scale=0.8]
    % Define custom styles for nodes and edges
    \tikzset{
        whitenode/.style={circle, draw=black, fill=white, minimum size=6pt, inner sep=3pt},  % White circular nodes
        square/.style={rectangle, draw=black, fill=white, minimum size=6pt, inner sep=4pt},  % Square nodes
        blackedge/.style={draw=black},  % Standard black edges
        blueedge_thick/.style={draw=blue, thick},  % Blue thick edges
    }

    % Place the nodes
    \begin{pgfonlayer}{nodelayer}
        \node[whitenode] (0) at (-2, 2) {};  % Central node
        \node[whitenode] (1) at (-4, 2) {$v$};  % Left white node with label v
        \node[whitenode] (2) at (0, 2) {};  % White nodes on the top row
        \node[whitenode] (3) at (2, 2) {};
        \node[whitenode] (4) at (4, 2) {};

        \node[square] (5) at (-4, -2) {$u$};  % Left square node with label u
        \node[square] (6) at (-2, -2) {};  % Square nodes on the bottom row
        \node[square] (7) at (0, -2) {};
        \node[square] (8) at (2, -2) {};
        \node[square] (9) at (4, -2) {};
    \end{pgfonlayer}

    % Draw the edges between nodes
    \begin{pgfonlayer}{edgelayer}
        % Edges from node 0
        \draw[blackedge] (0) -- (5);
        \draw[blackedge] (0) -- (6);
        \draw[blackedge] (0) -- (8);
        \draw[blackedge] (0) -- (9);

        % Edges from node 1
        \draw[blackedge] (1) -- (6);
        \draw[blackedge] (1) -- (9);

        % Edges from node 2
        \draw[blackedge] (2) -- (7);
        \draw[blackedge] (2) -- (8);
        \draw[blackedge] (2) -- (9);

        % Edges from node 3
        \draw[blackedge] (3) -- (7);
        \draw[blackedge] (3) -- (8);
        \draw[blackedge] (3) -- (9);

        % Edges from node 4
        \draw[blackedge] (4) -- (5);
        \draw[blackedge] (4) -- (6);
        \draw[blackedge] (4) -- (7);
        \draw[blackedge] (4) -- (8);
        \draw[blackedge] (4) -- (9);

        % Blue thick edge from node 1 to node 5
        \draw[blueedge_thick] (1) -- (5);
    \end{pgfonlayer}
\end{tikzpicture}
\caption{A chordal bipartite graph with connectivity 3, where $uv$ is a bisimplicial edge.}
\label{f3}
\end{figure}

\section{Acknowledgment}
We wish to thank Professor Brendan McKay for  very helpful discussion with the first author. Notably, McKay has provided a new function for generating  chordal bipartite graphs in the software nauty (version 2.8.9) \cite{McKay}.
We claim that there is no conflict of interest in our paper. No data was used for the research
described in the article.

% Adjust the spacing in the bibliography

\end{document}